\documentclass{amsart}
\usepackage{amsmath,amssymb}

\newtheorem{theorem}{Theorem}
\newtheorem{definition}[theorem]{Definition}
\newtheorem{lemma}[theorem]{Lemma}
\newtheorem{corollary}[theorem]{CZrollary}

\newtheorem{proposition}[theorem]{Proposition}

\newcommand{\C}{\mathbb{C}}

\newcommand{\Z}{\mathbb{Z}}

\begin{document}

\title{New constructions relating Real and Complex Contact Structures}
\author{Ali M. Elgindi}
\thanks{Institute of Mathematics, Henan Academy of Sciences, ali.m.elgindi@gmail.com}
\date{\today}

\begin{abstract}
We establish new connections between real and complex contact geometry via embeddings of 3-manifolds into $\C^3$. We introduce a new \emph{contact wedge} construction combining two transverse real contact structures into a complex one, subject to obstructions measured by the Nijenhuis tensor and Dolbeault cohomology. Dually, we form a \emph{hedge} construction which extracts real contact structures from complex ones. Applying these tools, we prove that $\C^3$ admits uncountably many complex contact structures, using constructions over the standard contact $S^3$ with Legendrian non-isotopic knots as complex tangents.
\par\
MSC: 32V40, 57K33
\par\
Key words: contact structure, holomorphic 

\end{abstract}
\maketitle

\section{Introduction}

We will make a relationship between real and complex contact structures in the sense that we can construct new complex contact structures using a pair of transverse real contact structures, in a process we call the "contact wedge". This construction will be subject to the vanishing of two topological invariants.

\begin{theorem}
Let $(\xi_1, \xi_2)$ be two transverse contact structures on a real 3-manifold $M$. Then their wedge $\delta = \xi_1 \wedge \xi_2$ defines an almost complex structure on a complex 2-plane bundle, which extends to a complex contact structure in a neighborhood of $M$ provided the Nijenhuis tensor vanishes and the Dolbeault obstruction in $H^1(M, O(\xi_1 \otimes \C))$ is trivial.
\end{theorem}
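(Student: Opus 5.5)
Fix the setup first. Choose contact forms $\alpha_1,\alpha_2$ with $\xi_i=\ker\alpha_i$. Transversality means $\alpha_1\wedge\alpha_2$ vanishes nowhere, so $\ell=\xi_1\cap\xi_2$ is a real line field on $M$. Define the wedge $\delta$ as the complex 2-plane bundle $\xi_1\otimes\C$ over $M$. Give it the almost complex structure $J$ built from the splitting $TM\oplus\nu\cong\xi_1\oplus\xi_2^{\perp}\oplus\ldots$. Concretely: pick a vector field $R$ spanning $\ell$, a vector $X\in\xi_1$ transverse to $\ell$, and a vector $Y\in\xi_2$ transverse to $\ell$. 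Then set $JR=X$ and $JY=\partial_t$, where $\partial_t$ is the normal direction in a collar $M\times(-\epsilon,\epsilon)$. This defines $J$ on $T(M\times(-\epsilon,\epsilon))$ along $M$, and the plane fields $\xi_1$, $\xi_2$ are recovered as $J$-invariant data. Next, the complex 1-form $\theta=\alpha_1+i\alpha_2$, extended constantly in $t$, satisfies $\ker\theta\cap TM=\ell$. Its kernel is a complex hyperplane field $\mathcal D$ of complex rank $2$ in the rank-3 bundle. This gives the first assertion: $\delta$ carries an almost complex structure, namely $J$ restricted to $\mathcal D$.

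For the extension I would proceed in two steps. \emph{Integrability.} By hypothesis the Nijenhuis tensor $N_J$ vanishes on the collar. Newlander--Nirenberg then makes $M\times(-\epsilon,\epsilon)$ into a complex 3-fold $U$ containing $M$ as a totally real-ish (CR) hypersurface. \emph{Holomorphicity of the form.} A complex contact structure on $U$ is a holomorphic line subbundle $L\subset T^*U$ locally generated by holomorphic $\omega$ with $\omega\wedge d\omega\neq0$. Take the smooth $(1,0)$-part $\theta^{1,0}$. The failure of holomorphicity is $\bar\partial\theta^{1,0}$, and we want to correct it to $\omega=\theta^{1,0}+\eta$ with $\bar\partial\omega=0$. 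Restricting to a thin neighborhood retracting onto $M$, the obstruction to solving $\bar\partial\eta=-\bar\partial\theta^{1,0}$ modulo multiplication by nonvanishing functions is a class in $H^1(M,\mathcal O(\xi_1\otimes\C))$. Here the sheaf of germs of holomorphic sections near $M$ is identified via $\delta\cong\xi_1\otimes\C$. By hypothesis this class vanishes, so a holomorphic $\omega$ exists after possibly shrinking the neighborhood.

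\emph{Nondegeneracy.} We must check $\omega\wedge d\omega\neq0$ along $M$; it then holds in a neighborhood by openness. At points of $M$, $\eta$ can be chosen to vanish to first order. Then $\omega\wedge d\omega$ restricted to $TM$ has real and imaginary parts involving $\alpha_1\wedge d\alpha_1-\alpha_2\wedge d\alpha_2$ and $\alpha_1\wedge d\alpha_2+\alpha_2\wedge d\alpha_1$. The normal components add $\partial_t$-terms built from $J$. Since $\alpha_1\wedge d\alpha_1\neq0$, the complex 3-form cannot vanish: its $dt$-free real part is controlled by the contact condition, using $JY=\partial_t$ to show the top-degree term contains $\alpha_1\wedge d\alpha_1\wedge dt$ with nonzero coefficient.

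The main obstacle is the middle step. One must make precise why the $\bar\partial$-correction lives in $H^1(M,\mathcal O(\xi_1\otimes\C))$, and why solving it in a neighborhood is legitimate. The first thing I would try is a Čech argument. Cover $M$ by small balls where $U$ is Stein, so local holomorphic contact forms exist by the Dolbeault--Grothendieck lemma. Their transition ratios then give a cocycle in $\mathcal O^*$, and the differences of local corrections give a cocycle in $\mathcal O(\xi_1\otimes\C)$. Triviality of that class lets the local forms be glued into the line bundle $L$.
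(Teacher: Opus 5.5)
There is a genuine gap, and it is dimensional. Your collar $M\times(-\epsilon,\epsilon)$ has real dimension $4$. An integrable $J$ on it therefore yields a complex \emph{surface}, not the complex $3$-fold you assert. Your phrase ``complex rank $2$ in the rank-$3$ bundle'' describes the $6$-dimensional picture, but a single normal direction $\partial_t$ cannot supply it. A complex surface carries no complex contact structure: for holomorphic $\omega$, $d\omega=\partial\omega$ has type $(2,0)$, so $\omega\wedge d\omega$ is a $(3,0)$-form and vanishes identically. So your holomorphicity and nondegeneracy steps cannot succeed in this setting. The nondegeneracy paragraph also mixes degrees: $\omega\wedge d\omega$ is a $3$-form, while $\alpha_1\wedge d\alpha_1\wedge dt$ is a $4$-form. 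The paper avoids this by working with $M$ already embedded in $\C^3$ via the explicit embedding $F$, so ``a neighborhood of $M$'' is an open subset of $\C^3$ of real dimension $6$.

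Your first assertion is not established either. With $JR=X$ and $JY=\partial_t$, the plane $\xi_2=\langle R,Y\rangle$ is not $J$-invariant, contrary to your claim, since $JR=X\notin\xi_2$. Likewise $\ker\theta=\ell\oplus\langle\partial_t\rangle$ is a real $2$-plane that is not $J$-invariant, so ``$J$ restricted to $\mathcal D$'' is undefined. In the paper, $J_\delta$ is defined on the abstract rank-$4$ bundle $\delta=\xi_1\oplus\xi_2$: it sends $v\in\xi_1$ to the corresponding vector of $\xi_2$ under the identification furnished by transversality. That is what makes $\delta$ a complex $2$-plane bundle (isomorphic to $\xi_1\otimes\C$, matching the group in the statement). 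The Nijenhuis hypothesis concerns this $J_\delta$, not an extension of your $J$ off $M$, which you never specify.

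Even in the right dimension, the later steps fail as written. On $TM$,
\[
\theta\wedge d\theta=(\alpha_1\wedge d\alpha_1-\alpha_2\wedge d\alpha_2)+i\,(\alpha_1\wedge d\alpha_2+\alpha_2\wedge d\alpha_1).
\]
For the transverse contact forms $\alpha_1=dz+x\,dy$, $\alpha_2=dx-z\,dy$ on $\mathbb{R}^3$, both parts vanish identically: each $\alpha_i\wedge d\alpha_i=dx\wedge dy\wedge dz$, and both cross terms are $0$. So the step ``$\alpha_1\wedge d\alpha_1\neq0$, hence the complex $3$-form cannot vanish'' is false: the $TM$-restriction you compute can vanish identically. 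At totally real points a $(3,0)$-form is determined by that restriction, so normal components cannot rescue it. Nondegeneracy needs an additional argument or a normalization of the forms. The paper's Lemma obtains the contact condition near $M$ only from openness, so this is a point you would have to supply yourself.

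Finally, the identification of your $\bar\partial$-obstruction with a class in $H^1(M,\mathcal O(\xi_1\otimes\C))$ is asserted, not derived. The equation $\bar\partial\eta=-\bar\partial\theta^{1,0}$ as you wrote it is obstructed by a class in $H^{1,1}_{\bar\partial}(U)\cong H^1(U,\Omega^1)$, which vanishes whenever $U$ is Stein. So solvability is not the real issue. The real issue is your unproved claim that $\eta$ can be taken to vanish to first order on $M$; without it, $\omega$ bears no controlled relation to $\xi_1,\xi_2$ or to nondegeneracy. The paper sidesteps this by invoking its ``Obstruction for Extension'' theorem, which takes vanishing of that class as equivalent to holomorphic extension. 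Your \v{C}ech sketch would have to prove that equivalence, and as written it does not.
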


We will also formulate a new construction which we call the "hedge" from which we can construct a real contact structure $\xi$ over a real 3-manifold $M$ which is itself a submanifold of a complex manifold $N$ admitting a given complex contact structure.

\section{New Constructions}

Let $M$ be a real closed, orientable 3-manifold. By the Explicit embedding theorem \cite{Elgindi2025}, there exists an embedding $F: M \hookrightarrow \C^3$ such that the complex tangents arise exactly along $\gamma$ with holomorphic tangent spaces being $\xi|_\gamma$.

By our construction of the embedding $F$ above, we have that the holomorphic tangent bundle of $M$ is $\xi|\gamma$, and we see that $\xi$ can be extended as a real 2-plane field over all of $\C^3$ which in turn can be considered as a holomorphic line bundle as it inherits the complex structure over Stein. Furthermore, this holomorphic line bundle can be assumed to be a (real) contact structure over real submanifolds (odd-dimensional) of $M \hookrightarrow \C^3$ and $\C^3$ is Stein (see [4]). We then have a chosen vector field $v_x$ given as the Reeb field of the contact structure over $M$, which gives an induced trivialization for the tangent bundle of $M$ given as $\{v_x, Jv_x, n_x\}$.

We designate a contact form $\alpha$ with $\ker\alpha = \xi$. A 1-form defined over $M \subset \C^3$ with complex tangents along $\gamma$ can be extended to all of $\C^3$ assuming the relevant obstruction vanishes.

\section{The Contact Wedge}

In this section we determine the obstructions to the existence of a contact wedge given a pair of transverse real contact structures on a real 3-manifold $M$.

\begin{theorem}
Let $(M,\xi_1,\xi_2)$ be a 3-manifold with two transverse contact structures. Let $\gamma \subset M$ be a knot. There exists an embedding $F: M \hookrightarrow \C^3$ such that $\gamma$ forms exactly the set of complex tangencies to the embedding $F$. This embedding, considered with the Fubini-Study metric, defines:
\begin{enumerate}
\item A holomorphic line bundle $\eta$ on $F(\gamma)$,
\item A real vector field $\rho$ (the Reeb field of $\xi_2$), normal to $\xi_1$.
\end{enumerate}
\end{theorem}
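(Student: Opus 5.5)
The plan is to build the embedding from the Explicit embedding theorem of \cite{Elgindi2025}, applied to the first structure $\xi_1$ and the knot $\gamma$. That result, quoted in the previous section, gives an embedding $F: M \hookrightarrow \C^3$ whose complex tangencies occur exactly along $\gamma$. At each point $p\in\gamma$ the holomorphic tangent space $T_pF(M)\cap J\,T_pF(M)$ equals $dF(\xi_1|_p)$.

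Away from $\gamma$ the embedding is totally real. Since $\dim_{\mathbb{R}} M = 3$ and $\dim_{\C}\C^3 = 3$, a generic embedding is totally real at a point exactly when $T_pM\cap JT_pM=0$. I would check that the construction of $F$ in \cite{Elgindi2025} is generic off $\gamma$, so that no further complex tangencies appear. This settles the first claim, that $\gamma$ is exactly the set of complex tangencies.

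For item (1), set $\eta_p = dF_p(T_pM)\cap J\,dF_p(T_pM)$ for $p\in F(\gamma)$. At a complex tangency this is a complex line, and it coincides with $dF(\xi_1)$ by the conclusion of the embedding theorem. Hence $\eta$ is a complex line subbundle of $T\C^3|_{F(\gamma)}$.

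To call $\eta$ holomorphic on the real curve $F(\gamma)$, I would use the construction of the preceding section. There $\xi_1$ is extended as a complex line field over the Stein manifold $\C^3$. Topological complex line bundles on a Stein manifold admit holomorphic structures, because $H^1(\C^3,\mathcal{O})=0$ and so $c_1$ is the only obstruction, and that obstruction vanishes on $\C^3$. Restricting the resulting holomorphic line bundle to $F(\gamma)$ gives $\eta$ as the restriction of a holomorphic line bundle.

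The Fubini--Study metric enters in item (2). I would view $\C^3\subset\mathbb{CP}^3$ and restrict the Fubini--Study metric to $F(M)$. Let $\alpha_2$ be a contact form for $\xi_2$ and $\rho$ its Reeb field, so that $\alpha_2(\rho)=1$ and $\iota_\rho d\alpha_2=0$.

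Transversality of $\xi_1$ and $\xi_2$ does not by itself make $\rho$ metrically normal to $\xi_1$. For that I would exploit the freedom in the embedding, that is, in the choice of $F$ up to isotopy. The aim is to choose $F$ so that the pulled-back metric makes $\rho$ orthogonal to $\xi_1$. Concretely, I would choose the normal direction $n_x$ in the framing $\{v_x,Jv_x,n_x\}$ of the previous section to be $dF(\rho)$. Alternatively, I would prescribe the induced metric $g$ on $M$ so that $\rho\perp_g\xi_1$, which is always possible pointwise because $\rho\notin\xi_1$ generically. In either version, one uses that the embedding theorem allows the induced metric to be adjusted by a diffeomorphism of the ambient or by a reparametrization.

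The main obstacle is this compatibility step. We need $\rho$ to be transverse to $\xi_1$ everywhere, which may fail at points where $\rho\in\xi_1$. We also need to realize the desired metric by an embedding without destroying the prescribed complex-tangency locus. My first approach would be to perturb $\alpha_2$ by a positive function, which changes $\rho$ while leaving $\xi_2$ fixed, so as to make $\rho$ nowhere tangent to $\xi_1$. Then I would apply a Nash-type or $h$-principle adjustment of $F$ away from a neighborhood of $\gamma$. Near $\gamma$ I would arrange normality by hand, using the local model of the complex tangency.
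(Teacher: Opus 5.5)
Your embedding step and your identification $\eta_p=dF_p(T_pM)\cap J\,dF_p(T_pM)=dF(\xi_1|_p)$ are exactly what the paper relies on. The paper gives no separate proof of this theorem: it only cites the Explicit embedding theorem and makes the Stein remarks of Section 2.

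The genuine gap is item (2), and your proposed repair cannot work in general. Everything hinges on choosing a contact form for $\xi_2$ whose Reeb field is nowhere tangent to $\xi_1$, and this is obstructed. Let $W$ span $L=\xi_1\cap\xi_2$ via $\iota_W d\alpha_2|_{\xi_2}=\alpha_1|_{\xi_2}$. Then the Reeb field $R_H$ of $\alpha_2/H$, for $H>0$, satisfies
\[
\alpha_1(R_H)=H\,\alpha_1(R_{\alpha_2})+W(H).
\]
So on any closed orbit of $W$ along which $\alpha_1(R_{\alpha_2})$ has zero time-average, $\alpha_1(R_H)$ vanishes somewhere, for every $H$.

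For a concrete case, on $T^3$ take $\alpha_2=\cos z\,dx+\sin z\,dy$ and $\alpha_1=-\sin z\,dx+\cos z\,dy$. Both are contact of the same sign, and they are everywhere transverse with $\xi_1\cap\xi_2=\langle\partial_z\rangle$. One has $d\alpha_2=dz\wedge\alpha_1$, $R_{\alpha_2}=\cos z\,\partial_x+\sin z\,\partial_y\in\xi_1$ and $W=\partial_z$. Hence $\alpha_1(R_H)=\partial_zH$, which vanishes on every circle $\{(x_0,y_0)\}\times S^1$. At such points $\rho\in\xi_1$, so $\rho$ is orthogonal to $\xi_1$ for \emph{no} metric whatsoever. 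In particular this holds for every metric pulled back from Fubini--Study by any embedding, so no Nash-type or $h$-principle adjustment of $F$ can help.

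So (2) is false at the stated generality; the paper asserts it without argument. It needs an added hypothesis, e.g.\ that $\xi_2$ admits a Reeb field transverse to $\xi_1$. Even granting that, your metric step is unsupported. Precomposing $F$ with a diffeomorphism merely transports $(\xi_1,\xi_2,\gamma)$, and a non-holomorphic ambient diffeomorphism moves the complex tangencies. You would still need an argument that realizes $dF(\rho)\perp dF(\xi_1)$ while keeping the tangency locus equal to $\gamma$.

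There are two smaller points. First, in (1), $H^1(\C^3,\mathcal O)=0$ and Oka--Grauert only give a holomorphic structure on an abstract line bundle. Over the circle $F(\gamma)$ that bundle is trivial anyway, so in that reading (1) is vacuous. If $\eta$ is meant to be the restriction of a holomorphic line \emph{subbundle} of $T\C^3$, you need the map $F(\gamma)\to\mathbb{CP}^2$, $p\mapsto\eta_p$, to extend holomorphically. That is a genuine condition (essentially real-analyticity of the data along the curve), not a consequence of Steinness. Second, genericity cannot be what rules out extra complex tangencies. For 3-manifolds in $\C^3$, complex tangencies are generically one-dimensional (or empty), so ``exactly along $\gamma$'' must come from the embedding theorem itself.
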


Given two transverse real contact structures $\xi_1, \xi_2$ on $M \subset \C^3$, we need to ensure their wedge $\delta = \xi_1 \wedge \xi_2$ is a complex contact structure on a neighborhood of $M \subset \C^3$.

We first define a $\C$-linear operator $J$ on $\delta = \xi_1 \oplus \xi_2$ such that $J^2 = -I$. By transversality, at each point $x \in M$ we may write $T_xM = \xi_1 \oplus L(x)$ where $L(x)$ is a real line field, and $\xi_2$ projects to contain $L$ isomorphically to $\xi_1$. Stipulating $J_\delta(v) = w$ for $v \in \xi_1$ as the unique vector in $\xi_2$ corresponding to $v$ makes $\delta$ into an almost complex structure.

The obstruction to integrability is given by the Newlander-Nirenberg theorem: $J_\delta$ is integrable iff the Nijenhuis tensor $N(J_\delta)$ vanishes.

\begin{theorem}[Obstruction for existence of the Wedge]
The wedge $\delta = \xi_1 \wedge \xi_2$ can be given the structure of a complex contact structure on a neighborhood of $M$ in $\C^3$ only if the almost complex structure $J_\delta$ is integrable. The primary obstruction is the vanishing of the Nijenhuis tensor $N(J_\delta)$ on $M$.
\end{theorem}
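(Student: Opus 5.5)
The statement is a necessity claim: if $\delta=\xi_1\wedge\xi_2$ carries a complex contact structure on a neighborhood $U$ of $M\subset\C^3$, then $J_\delta$ must be integrable, so $N(J_\delta)=0$ on $M$. I would argue by contraposition. Suppose $\delta$ is realized as a holomorphic subbundle $\mathcal{D}\subset T^{1,0}U$ of complex rank $2$ which is locally the kernel of a holomorphic $1$-form $\theta$ with $\theta\wedge d\theta\neq 0$. The complex structure that $\mathcal{D}$ inherits from the ambient integrable structure of $U\subset\C^3$ must agree along $M$ with the operator $J_\delta$ constructed above, which sends $v\in\xi_1$ to its partner in $\xi_2$. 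This compatibility is what "giving $\delta$ the structure of a complex contact structure" means, and I would state it explicitly as the standing hypothesis.

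\textbf{Step 1.} Compute $N(J_\delta)$ through the restriction of the ambient Nijenhuis tensor. For vector fields $X,Y$ tangent to $\mathcal{D}$, the tensor $N_{J}(X,Y)=[JX,JY]-J[JX,Y]-J[X,JY]-[X,Y]$ of the ambient structure $J$ is defined, and it vanishes identically because $\C^3$ is a complex manifold. By the compatibility assumption, $J$ restricts to $J_\delta$ on sections of $\mathcal{D}$. So the restriction of $N_J$ to $\mathcal{D}$ is exactly $N(J_\delta)$, interpreted as the tensor of the almost complex distribution, with values in $TU$ rather than in $\mathcal{D}$. Hence $N(J_\delta)=0$.

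\textbf{Step 2.} Obtain the same conclusion intrinsically. Since $\mathcal{D}$ is a holomorphic subbundle, its local sections of type $(1,0)$ are closed under the Lie bracket modulo $\mathcal{D}\oplus\overline{\mathcal{D}}$ directions coming from $T^{1,0}U$. In particular $[\mathcal{D}^{1,0},\mathcal{D}^{1,0}]\subset T^{1,0}U$, which is the Newlander--Nirenberg integrability condition for $J_\delta$ in the sense of the obstruction theorem above. The contact condition $\theta\wedge d\theta\neq0$ adds nothing here. It only records that the component of the bracket along $T^{1,0}U/\mathcal{D}$ is nondegenerate, and it plays no role in the necessity direction.

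\textbf{Step 3.} Justify the word \emph{primary}. Among all obstructions, $N(J_\delta)$ is the first-order, pointwise tensorial one: it is determined by the $1$-jet of $(\xi_1,\xi_2)$ along $M$. The further obstruction in $H^1(M,\mathcal{O}(\xi_1\otimes\C))$ from the first theorem only enters once $N(J_\delta)=0$ already holds, when one extends the structure off $M$.

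The main obstacle is Step 1's identification. $J_\delta$ is defined only on $\delta$ over $M$, and $\xi_1\oplus\xi_2$ has real rank $4$ inside $T_xM$, which has rank $3$. So $\delta$ cannot literally be a subbundle of $TM$, and its rank-$4$ realization must take place in $TU|_M$. I would first fix this by defining $\delta$ as the abstract bundle $\xi_1\oplus\xi_2$ mapped into $T\C^3|_M$ via $(v,w)\mapsto v+J_{\C^3}w$-type identifications coming from the embedding $F$ of Theorem 2. I would then check that the bracket used in $N(J_\delta)$ is computed with respect to extensions to $U$. If that identification cannot be made canonically, the proof reduces to the tautology that any structure induced from an integrable ambient one has vanishing Nijenhuis tensor, and I would present it at that level.
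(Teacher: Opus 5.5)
Your argument is essentially the paper's, made explicit: the paper only observes that a complex contact structure forces $J_\delta$ to be integrable and cites Newlander--Nirenberg to get $N(J_\delta)=0$, while your Step~1 uses just the trivial direction (the ambient $N_J$ vanishes on $\C^3$, hence so does its restriction to $\delta$), which is all the necessity claim needs. One caution: the realization $(v,w)\mapsto v+Jw$ you float cannot satisfy your compatibility hypothesis, because at a totally real point $T_x\C^3=T_xM\oplus JT_xM$ and $\xi_1\oplus J\xi_2$ is $J$-invariant only if $\xi_1=\xi_2$; so present the proof at the level of the tautology, as you suggest, and note that in that reading $N(J_\delta)$ vanishes automatically, which leaves your Step~3 claim that it is governed by the $1$-jet of $(\xi_1,\xi_2)$ unsupported.
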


We assume the extension of holomorphic line fields over $\gamma$ extends naturally to all of $M$ and $\C^3$, which requires the vanishing of another obstruction given by sheaf cohomology.

\begin{theorem}[Extension via Sheaf Cohomology]
Let $M \subset \C^3$ be compact. The obstruction to extending a holomorphic line bundle from $M$ to $\C^3$ lies in $H^2(\C^3, M; \Z)$. Since $\C^3$ is Stein, this vanishes by Cartan's Theorem B \cite{Cartan1953}.
\end{theorem}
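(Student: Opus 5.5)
The plan is to reduce the analytic extension problem to a topological one, using the exponential sheaf sequence on Stein spaces, and then to show that the topological class attached to the bundles actually used in the wedge construction is zero.

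\textbf{Step 1: a Stein neighbourhood.} I read ``holomorphic line bundle on $M$'' as a holomorphic line bundle on some open neighbourhood $U$ of $F(M)$ in $\C^3$. Since $M$ is compact, I would shrink $U$ to a Stein neighbourhood that deformation retracts onto $F(M)$. Away from $\gamma$ the embedding is totally real, which makes this plausible, but the existence of such a $U$ near the complex tangencies along $\gamma$ is an assumption I cannot check here.

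\textbf{Step 2: Picard groups are topological.} On any Stein space $X$ the exponential sequence $0\to\Z\to\mathcal O\to\mathcal O^*\to 0$ gives
\[
H^1(X,\mathcal O)\to H^1(X,\mathcal O^*)\to H^2(X;\Z)\to H^2(X,\mathcal O).
\]
By Cartan's Theorem B \cite{Cartan1953}, $H^1(X,\mathcal O)=H^2(X,\mathcal O)=0$, so $\mathrm{Pic}(X)\cong H^2(X;\Z)$ via $c_1$. I apply this to both $X=U$ and $X=\C^3$. This is the only point where Theorem B is used. It shows that a holomorphic line bundle on $U$ extends holomorphically to $\C^3$ exactly when its Chern class lies in the image of the restriction map $H^2(\C^3;\Z)\to H^2(U;\Z)\cong H^2(M;\Z)$.

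\textbf{Step 3: locating the obstruction.} I would write out the long exact sequence of the pair $(\C^3,M)$,
\[
H^2(\C^3;\Z)\to H^2(M;\Z)\to H^3(\C^3,M;\Z),
\]
and record the class of the relative cocycle obtained from transition functions of the bundle on $U$ against the trivial bundle on $\C^3\setminus \overline{U}$. The intended bookkeeping is that the relevant transition data are the degree-one cocycle of $\mathcal O^*$, which the connecting map of the exponential sequence converts into an integral class. I would place this class in $H^2(\C^3,M;\Z)$, as the statement does. I expect the degree of this identification to need care, since the sequence above naturally lands in $H^3$.

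\textbf{Step 4: vanishing for the bundles of interest (main obstacle).} Since $\C^3$ is contractible, $H^2(\C^3,M;\Z)\cong H^1(M;\Z)$, which is not zero for every $M$. So Theorem B alone cannot kill the group; the obstruction class itself must vanish. I would argue this from the construction in Section 2. The line bundle $\eta$ on $F(\gamma)$, and its extension over $M$, is by construction the restriction of the plane field $\xi$ already extended over all of $\C^3$ and regarded there as a holomorphic line bundle. A bundle that is the restriction of a global one has zero obstruction class, by exactness at $H^2(M;\Z)$. Combined with Step 2, Theorem B then upgrades the topological extension to a holomorphic one. This is the step I expect to be delicate, because it relies on the Section 2 extension of $\xi$ being holomorphic rather than merely smooth.
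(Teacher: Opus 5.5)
There is a genuine gap, and it cannot be closed: the statement is false as written, and your Step 4 remark already shows why. Since $\C^3$ is contractible, the long exact sequence of the pair gives isomorphisms $H^1(M;\Z)\cong H^2(\C^3,M;\Z)$ and $\delta\colon H^2(M;\Z)\xrightarrow{\cong}H^3(\C^3,M;\Z)$. Theorem B concerns coherent analytic sheaves and says nothing about $H^2(\C^3,M;\Z)$, which is nonzero whenever $b_1(M)>0$.

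The obstruction your Step 2 actually produces is $c_1(L)\in H^2(M;\Z)$. A holomorphic line bundle $L$ on a Stein neighbourhood $U\simeq M$ extends to $\C^3$ if and only if $c_1(L)=0$, because $H^2(\C^3;\Z)=0$. Its relative form is $\delta c_1(L)\in H^3(\C^3,M;\Z)$, which is also where obstruction theory for extending maps into $K(\Z,2)$ puts it. So Step 3 is not a matter of degree bookkeeping: no class in $H^2(\C^3,M;\Z)$ measures this problem.

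Nor does the obstruction vanish in general. Take the totally real torus $T^3=\{|z_1|=|z_2|=|z_3|=1\}$ with Stein neighbourhood $U=\{1/2<|z_j|<2,\ j=1,2,3\}$. Then $\operatorname{Pic}(U)\cong H^2(T^3;\Z)\cong\Z^3$. No bundle with nonzero Chern class extends, since every line bundle on $\C^3$ is trivial. Here also $H^2(\C^3,T^3;\Z)\cong\Z^3\neq 0$, so both sentences of the statement fail.

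Step 4 does not rescue this. Invoking the Section 2 extension of $\xi$ over $\C^3$ presupposes the extension the theorem is meant to provide. That extension can itself exist only when $c_1(\xi)=0$, again because bundles over $\C^3$ are trivial. Restricting to ``the bundles of interest'' also changes the statement.

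The paper's proof does not fill the gap either. It uses the same exponential-sequence/Oka--Grauert reduction as your Step 2, but only on $\C^3$. It then asserts that the topological extension obstruction vanishes because $\C^3$ is contractible, which is backwards: contractibility means only topologically trivial bundles can extend. It never exhibits a class in $H^2(\C^3,M;\Z)$.

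What your Step 2 does prove, granting the Stein neighbourhood near $\gamma$ that you flagged, is a corrected statement: $L$ extends if and only if $c_1(L)=0$ in $H^2(M;\Z)$. This holds automatically when $H^2(M;\Z)=0$, e.g.\ for $M=S^3$ as used in Section 5.
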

\begin{proof}
We then use the exponential exact sequence of sheaves:
\[
0 \to \Z \to \mathcal{O} \xrightarrow{\exp(2\pi i \cdot)} \mathcal{O}^* \to 0
\]
which induces a long exact sequence in cohomology with the map $c_1$ as the first Chern class. Since $\C^3$ is Stein, the Oka-Grauert principle implies that the holomorphic classification of bundles coincides with the topological classification. A holomorphic line bundle on $\C^3$ is determined entirely by its first Chern class.

Hence, our only obstruction is for the real 2-plane field $\xi$ on $M$ to define a complex line bundle $\xi \cdot \C$ on $M$ (by complexifying its vectors) to extend to $\C^3$. This bundle has a topological first Chern class $c_1(\xi \cdot \C) \in H^2(M;\Z)$. To extend $\xi$ to a holomorphic line bundle on $\C^3$, the topological bundle must first extend topologically. Since $\C^3$ is contractible, this obstruction vanishes. However, the more relevant obstruction is that the extended bundle must have a holomorphic structure whose restriction to $M$ coincides with the specific complex structure on $\xi$ induced by the embedding.
\end{proof}

\begin{theorem}[Obstruction for Extension]
Let $\xi$ be a real 2-plane field on $M \subset \C^3$. The necessary and sufficient condition for $\xi$ to extend as a holomorphic line bundle on $\C^3$ is that the Dolbeault operator $\bar\partial$ on the complexified bundle $\xi \otimes \C$ is trivial in the cohomology group $H^1(M, O(\xi \cdot \C))$.
\end{theorem}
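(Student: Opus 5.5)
The plan is to make the statement precise first, because $M$ is a real $3$-manifold and the sheaf $O(\xi\cdot\C)$ only makes sense once we say what ``holomorphic'' means along $M$. Away from $\gamma$ the embedding $F$ of the Explicit embedding theorem is totally real. Along $\gamma$ it has a complex tangent equal to $\xi|_\gamma$. I will therefore work on a thin tubular neighborhood $U$ of $F(M)$ in $\C^3$, which can be chosen Stein (Grauert's Stein-neighborhood theorem for totally real sets, with a local adjustment near $\gamma$). I will interpret $O(\xi\cdot\C)$ as the sheaf on $M$ of germs along $M$ of smooth sections of the smooth extension $E$ of $\xi\otimes\C$ to $U$ that are annihilated by $\bar\partial$. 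In other words, $H^1(M,O(\xi\cdot\C))$ is computed as $\varinjlim_{U\supset M} H^1(U,\cdot)$. The object ``$\bar\partial$ on $\xi\otimes\C$'' is then the $(0,1)$-part of a connection on $E$, and the claimed condition is that the class it determines, in the sense of the Dolbeault isomorphism, is trivial.

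\emph{Necessity.} Suppose $\xi$ extends to a holomorphic line bundle $\mathcal L$ on $\C^3$. Because $H^1(\C^3,\mathcal O)=0$ and $H^2(\C^3;\Z)=0$, the exponential sequence used in the proof of the previous theorem shows that $\mathcal L$ is holomorphically trivial. A nowhere-vanishing holomorphic section then restricts to a frame of $E|_U$ that is killed by $\bar\partial$. This means the $\bar\partial$-operator induced on $\xi\otimes\C$ is gauge-equivalent to the trivial one, so its class vanishes.

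\emph{Sufficiency.} Suppose the class vanishes. Cover $M$ by small open sets $U_i\subset\C^3$. On each $U_i$, pick a local $\bar\partial$-closed frame $s_i$; this exists locally by the Newlander--Nirenberg input already used for the wedge. The transition functions $g_{ij}=s_i/s_j$ form a Čech $1$-cocycle in $\mathcal O^*$. The hypothesis says that the corresponding $\bar\partial$-class in $H^1$ is zero. I then need two facts:
\begin{itemize}
\item Cartan's Theorem B on the Stein neighborhood $U$ lets me solve $\bar\partial u=\theta$ with $\theta$ the connection form, after shrinking $U$. This produces a global $\bar\partial$-closed frame, so $E|_U$ is holomorphically trivial.
\item A trivial bundle on $U$ extends trivially to $\C^3$, namely as $\C^3\times\C$. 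This extension restricts to the given complex structure on $\xi$ along $M$.
\end{itemize}
The lift from $H^1(\mathcal O)$ to $H^1(\mathcal O^*)$ has to be checked with the exponential sequence: on $U$ I need $c_1(\xi)=0$ in $H^2(M;\Z)\cong H^2(U;\Z)$. This is consistent with necessity, since any bundle extended over the contractible $\C^3$ has $c_1=0$. I would state this Chern-class condition explicitly as part of what ``trivial'' means, and include it in the argument.

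The main obstacle I expect is the sufficiency step near $\gamma$. There $F$ is neither totally real nor CR-generic in a uniform way, so both the existence of a Stein neighborhood and the local solvability of $\bar\partial$ for germs along $M$ are delicate. My first attempt would be to use the explicit normal form of $F$ near $\gamma$ from \cite{Elgindi2025} to exhibit a local Stein (polydisc-like) neighborhood, and to treat $\gamma$ as an elliptic-type complex point, where Bishop-disc or Hartogs-type extension arguments give local holomorphic frames. If that fails, I would weaken the theorem to say ``extends to a neighborhood of $M$, and hence to $\C^3$'', proving the equivalence only on Stein neighborhoods where it holds cleanly.
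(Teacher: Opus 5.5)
The paper gives no proof of this theorem; the statement is followed directly by the next lemma. The only related reasoning is in the proof of the preceding theorem, which claims the topological obstruction to extending $\xi$ over $\C^3$ vanishes \emph{because} $\C^3$ is contractible. You are right that this is backwards: a bundle restricted from a contractible space is trivial, so $c_1(\xi)=e(\xi)=0$ in $H^2(M;\Z)$ is forced.

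That observation is exactly where your argument stops proving the stated theorem. In your own reading, $H^1(M,\mathcal{O}(\xi\cdot\C))=\varinjlim_U H^1(U,\mathcal{O}(E))$ over Stein neighborhoods $U$. Each of these groups is zero by Theorem B, and so is $H^{0,1}(U)$, where the class of your connection form $\theta$ would live. So the stated hypothesis holds automatically and carries no information. Indeed your sufficiency step never uses it: you solve $\bar\partial u=\theta$ by Theorem B alone. What actually does the work is $c_1(\xi)=0$, a condition in $H^2(M;\Z)$ that no $H^1$ Dolbeault class can see. Your \v{C}ech cocycle $(g_{ij})$ defines a class in $H^1(U,\mathcal{O}^*)\cong H^2(U;\Z)$, and it lifts to a ``corresponding $\bar\partial$-class'' in $H^1(U,\mathcal{O})$ only if $c_1=0$ already. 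Concretely, take a plane field with Euler class $2$ on $M=S^1\times S^2$; by Eliashberg it can even be chosen to be an overtwisted contact structure. It satisfies the hypothesis in your reading, yet it cannot extend over $\C^3$, because every holomorphic line bundle on $\C^3$ is trivial ($H^1(\C^3,\mathcal{O})=0=H^2(\C^3;\Z)$). Declaring $c_1(\xi)=0$ to be ``part of what trivial means'' therefore replaces the theorem by a different, correct one rather than proving it.

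There are two further problems. First, the local $\bar\partial$-closed frames $s_i$ are not supplied by Newlander--Nirenberg, nor by the wedge hypothesis, which concerns $J_\delta$. The $(0,1)$-part of an arbitrary connection on an arbitrary smooth extension $E$ over an open set of $\C^3$ need not satisfy $\bar\partial_E^2=0$; for a line bundle this condition is $\bar\partial\theta=0$. Without it your sheaf may have no nonzero local sections. Relatedly, a holomorphic structure near $M$ is not determined by $\xi$ on $M$. So in the necessity step, the restricted frame of $\mathcal L$ is $\bar\partial_E$-closed only if your chosen $\bar\partial_E$ happens to be the one coming from $\mathcal L$.

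Second, the delicate analysis near $\gamma$ (Stein neighborhoods at complex points, Bishop discs) is unnecessary. Since all holomorphic line bundles on $\C^3$ are trivial, ``$\xi$ extends as a holomorphic line bundle on $\C^3$'' simply means that $\xi$ is trivial as a complex line bundle over $M$. A two-line argument proves the criterion $c_1(\xi)=0$ outright: a restriction of a trivial bundle is trivial, and conversely a global frame identifies $\xi$ with $(\C^3\times\C)|_M$. The Dolbeault condition of the statement plays no role.
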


\begin{lemma}[Local Complex Contact Structure]
Let $\xi_1, \xi_2$ be transverse contact structures on $M \subset \C^3$. Then there exists a neighborhood $U$ of $M$ such that $\delta = \xi_1 \wedge \xi_2$ is a complex contact structure on $U$.
\end{lemma}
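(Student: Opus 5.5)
The plan is to build $\delta$ first on $M$ as a complex line field carrying a holomorphic contact form, and then to spread it to a tubular neighborhood $U$ of $M$ in $\C^3$ using the two obstruction results already stated.

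\textbf{Step 1: construction on $M$.} Fix contact forms $\alpha_1,\alpha_2$ with $\ker\alpha_i=\xi_i$. Transversality gives $\alpha_1\wedge\alpha_2\neq 0$ pointwise on $M$. Set $\theta=\alpha_1+i\alpha_2$, a complex-valued 1-form on $M$.

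\textbf{Step 2: extension to $U$.} Using the embedding of the preceding Theorem together with the splitting $T_xM=\xi_1\oplus L(x)$ and the operator $J_\delta$, extend $\theta$ to a $(1,0)$-form $\Theta$ on a tubular neighborhood $U$. To do this, identify the normal direction $n_x$ with $J$ applied to the Reeb direction, and prescribe $\Theta$ on $n_x$ so that it is $\C$-linear.

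\textbf{Step 3: making $\Theta$ holomorphic.} The error $\bar\partial\Theta$ defines a class in $H^1(M,\mathcal O(\xi_1\otimes\C))$. By the Theorem on the Obstruction for Extension this class is the only obstruction, and we assume it is trivial. The Extension via Sheaf Cohomology theorem, using that $\C^3$ is Stein and Cartan's Theorem B, then lets us correct $\Theta$ by a smooth section $u$ solving $\bar\partial u=\bar\partial\Theta$ on a Stein neighborhood. Shrinking $U$, we take it to be a Stein tube, which exists because $M$ is totally real away from $\gamma$. The corrected form $\Theta'=\Theta-u$ is then holomorphic.

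\textbf{Step 4: the complex contact condition.} Let $\delta=\ker\Theta'\subset T^{1,0}U$, a holomorphic hyperplane field of complex rank 2. Being a complex contact structure amounts to the nondegeneracy condition $\Theta'\wedge d\Theta'\wedge d\Theta'\neq 0$. It suffices to check that $\Theta'\wedge d\Theta'\ne 0$ restricted to $M$ in a suitable sense and then invoke openness. Concretely, compute on $M$:
\[
\theta\wedge d\theta=(\alpha_1\wedge d\alpha_1-\alpha_2\wedge d\alpha_2)+i(\alpha_1\wedge d\alpha_2+\alpha_2\wedge d\alpha_1).
\]
The real part is a difference of two volume forms, so it may cancel. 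The imaginary part, however, is controlled by transversality: evaluating on the frame $\{v_x,Jv_x,n_x\}$, with $v_x$ the Reeb field of $\xi_2$ (the field $\rho$ of the preceding Theorem), the term $\alpha_1\wedge d\alpha_2$ is nonzero. The holomorphic extension then gives $\Theta'\wedge(d\Theta')^2\neq 0$ along $M$, since the normal direction contributes a factor $d\Theta'(n_x,\cdot)$ that is determined by $\C$-linearity. Nonvanishing is an open condition and $M$ is compact, so it persists on a smaller neighborhood $U$. Integrability of $J_\delta$ is assumed through the vanishing of the Nijenhuis tensor, as in the Obstruction for existence of the Wedge theorem, so $\delta$ is a genuine holomorphic subbundle.

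\textbf{Expected main obstacle.} The hard part is Step 4: showing that the nondegeneracy of $\theta\wedge d\theta$ on the 3-manifold $M$ really forces $\Theta'\wedge(d\Theta')^2\neq0$ in complex dimension 3. A priori this is a 5-form condition, while only 3-form information is available on $M$. I would first try to exploit that $M$ is maximally totally real off $\gamma$, so that $T_xU=T_xM\otimes\C$ and the holomorphic 5-form is determined by its restriction to $T_xM$. The case along $\gamma$ would be handled separately, using the fact that there $\xi|_\gamma$ is complex, so $J_\delta$ coincides with the ambient $J$.
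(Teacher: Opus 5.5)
\textbf{Step 4 targets the wrong condition, and once that is corrected, transversality does not give nondegeneracy on $M$.}

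On a complex 3-fold ($\dim_\C = 2n+1$ with $n=1$) the complex contact condition is $\Theta\wedge d\Theta\neq 0$, a holomorphic $(3,0)$-form. By contrast, $\Theta\wedge(d\Theta)^2$ is a $(5,0)$-form and vanishes identically on $U\subset\C^3$. So the obstacle you anticipate (a 5-form condition versus 3-form data) is an artifact.

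With the right condition, passing from $M$ to $U$ is easy at totally real points. A $(3,0)$-form at $x$ is nonzero iff its pullback to the totally real plane $T_xM$ is nonzero, and the pullback of $\Theta\wedge d\Theta$ is $\theta\wedge d\theta$ when $\Theta|_{TM}=\theta$. So everything rests on $\theta\wedge d\theta\neq0$ on $M$.

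This is where the argument fails. Your claim $\alpha_1\wedge d\alpha_2\neq 0$ is equivalent to the Reeb field $R_2$ of $\alpha_2$ not lying in $\xi_1$. Transversality does not exclude $R_2\in\xi_1$; indeed $R_2\in\xi_1$ forces $\xi_1\neq\xi_2$, since $R_2\notin\xi_2$. Moreover, the imaginary part is the sum $\alpha_1\wedge d\alpha_2+\alpha_2\wedge d\alpha_1$, which can cancel.

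Concretely, on the totally real $\mathbb{R}^3\subset\C^3$ take $\alpha_1=\cos z\,dx-\sin z\,dy$ and $\alpha_2=dz+x\,dy$. Then:
\begin{itemize}
\item the kernels are everywhere transverse, since $\partial_z\in\xi_1\setminus\xi_2$;
\item $\alpha_1\wedge d\alpha_1=\alpha_2\wedge d\alpha_2=dx\wedge dy\wedge dz$ and $\alpha_1\wedge d\alpha_2=0$;
\item $\theta\wedge d\theta=-ix\sin z\,dx\wedge dy\wedge dz$.
\end{itemize}
The holomorphic extension $\Theta$ (complexify $x,y,z$) is nowhere zero and satisfies $\Theta\wedge d\Theta=-ix\sin z\,dx\wedge dy\wedge dz$. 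Hence $\ker\Theta$ fails to be contact at the points of $\{x=0\}\cap\mathbb{R}^3$, in every neighborhood.

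Nondegeneracy can be forced, but by a choice unrelated to transversality. Replacing $\alpha_2$ by $\epsilon\alpha_2$ makes the real part $\alpha_1\wedge d\alpha_1-\epsilon^2\alpha_2\wedge d\alpha_2$ nonvanishing on compact $M$ for small $\epsilon$. This also shows that $\ker\Theta$ depends on the chosen forms, not only on $\xi_1,\xi_2$.

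\textbf{Two further links between $M$ and $U$ are missing.}

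First, in Step 3 the correction $u$ changes the form on $M$. So $\Theta'|_{TM}\neq\theta$, and your computation on $M$ says nothing about $\Theta'$ unless you prove that $u$ is $C^1$-small near $M$. One way is to choose $\Theta$ with $\bar\partial\Theta$ vanishing to high order along $M$ and use $\bar\partial$-estimates on shrinking Stein tubes. Solvability was never the issue, since $H^1(U,\mathcal O)=0$ on a Stein tube anyway; what you need is smallness. Also, $H^1(M,\mathcal O(\xi_1\otimes\C))$ has no meaning on a real 3-manifold.

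Second, along $\gamma$ the restriction argument collapses. Where $T_xM$ contains a complex line, every $(3,0)$-form pulls back to zero on $T_xM$, so nondegeneracy at $x$ cannot be read off from $M$ at all. Worse, $\theta$ is then not the pullback of any $(1,0)$-form. That would require $\theta$ to be $\C$-linear on $H_x=T_xM\cap JT_xM$. If $H_x=\xi_1|_x$, as in the embedding you invoke, this forces $\alpha_2|_{H_x}=0$, i.e.\ $\xi_1=\xi_2$ at $x$, contradicting transversality.

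The paper's own proof does not supply this step either. It takes holomorphicity of $\delta$ from the extension theorem and then only appeals to openness of the contact condition, without ever verifying that condition along $M$.
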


\begin{proof}
Since $\delta$ is a holomorphic 2-plane field on a neighborhood of $M$ by the extension theorem, and the contact condition $\theta \wedge d\theta \neq 0$ is an open condition, we may consider local holomorphic 1-forms $\theta$ with $\ker\theta = \delta$. The condition extends to hold on all of $M$ by continuity, and holds in some neighborhood $U$ of $M$. A partition of unity glues these local contact forms into a global contact structure on $U$.
\end{proof}

\begin{corollary}
If the wedge obstruction classes vanish, then $\delta$ extends to a complex contact structure on all of $\C^3$.
\end{corollary}

\begin{proof}
By the lemma, $\delta$ is contact on a neighborhood $U$ of $M$. Since $\C^3$ is Stein and contractible, and complex contact structures satisfy the $h$-principle, we extend from $U$ to all of $\C^3$.
\end{proof}

\section{The Hedge}

The hedge is the dual construction: extracting a real contact structure from a complex one.

\begin{definition}[Hedge]
Let $(N, \psi)$ be a complex contact manifold of dimension $2n+1$, and let $\xi \subset \psi$ be a holomorphic line subbundle. Let $M \subset N$ be a totally real submanifold of real dimension $2n+1$ that is Legendrian with respect to $\operatorname{Im}(\psi)$. We say $\xi$ is a \emph{hedge} of $\psi$ over $M$ if $\xi|_M$ is a real contact structure on $M$.
\end{definition}

\begin{theorem}[Hedge Induces Contact Structure]
Let $(N, \psi)$ be a complex contact manifold of dimension 3, and $\xi \subset \psi$ a holomorphic line subbundle. Let $M \subset N$ be a totally real 3-manifold that is Legendrian with respect to $\operatorname{Im}(\psi)$. Then $\xi|_M$ is a real contact structure on $M$.
\end{theorem}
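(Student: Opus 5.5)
The plan is to read ``$\xi|_M$'' as the real $2$-plane field on $M$ that $\xi$ determines through the totally real splitting. Then I would show that this plane field is the kernel of a real contact form on $M$, namely the restriction of the local holomorphic contact form. Since $M$ is totally real of real dimension $3$ in the complex $3$-fold $N$, we have $T_xN = T_xM \oplus J T_xM$ at every $x\in M$. Let $\pi_x : T_xN \to T_xM$ be the projection along $JT_xM$. A complex line $\xi_x$ can never lie inside $T_xM$, so I take $\xi|_M$ to mean the real plane field $\pi(\xi)$ on $M$. I will prove that $\pi(\xi)=\ker \alpha$ for a real $1$-form $\alpha$ with $\alpha\wedge d\alpha\neq 0$.

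\emph{Step 1 (the form).} Near a point of $M$, choose a local holomorphic $1$-form $\theta$ with $\ker\theta=\psi$ and $\theta\wedge d\theta\neq 0$. I read the Legendrian hypothesis with respect to $\operatorname{Im}(\psi)$ as $\iota^*\operatorname{Im}\theta = 0$, where $\iota : M\hookrightarrow N$ is the inclusion. Hence $\iota^*\theta = \alpha$ is a real $1$-form on $M$, and $\iota^* d\theta = d\alpha$. Rescaling $\theta$ by a nonvanishing holomorphic function that is real on $M$ does not change $\ker\alpha$, so these local forms patch to a well-defined plane field on $M$.

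\emph{Step 2 (identification with $\xi$).} Write $v\in\psi_x$ as $v=a+Jb$ with $a,b\in T_xM$. Since $\theta$ is $\C$-linear,
\[
0=\theta(v)=\alpha(a)+i\,\alpha(b),
\]
so $a,b\in\ker\alpha$. This gives $\psi|_M=\ker\alpha\otimes\C$. Now let $\xi_x=\operatorname{span}_\C(v)$. Then $\pi(v)=a$ and $\pi(Jv)=-b$, so $\pi(\xi_x)=\operatorname{span}_{\mathbb R}(a,b)\subseteq\ker\alpha_x$.

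The main obstacle is showing that $a$ and $b$ are independent, i.e.\ that $\xi_x$ is not the complexification of a single real line in $\ker\alpha_x$. If that degeneracy occurred, $\pi(\xi_x)$ would drop rank. My first attempt would be to show that such a point makes $\xi_x$ totally real-tangent in a way that contradicts the holomorphicity of $\xi$ combined with the nondegeneracy of $d\theta|_\psi$. Concretely, if $\xi_x=\ell\otimes\C$ with $\ell\subset T_xM$, then $\xi_x\cap T_xM\neq 0$. I would try to derive a contradiction by pairing with $d\theta$ on $\psi$. If that fails, I would flag that the statement needs the standing genericity assumption $\xi\cap TM=0$ along $M$, which is implicit in the definition of a hedge, and then $\pi(\xi)=\ker\alpha$ by a dimension count.

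\emph{Step 3 (contact condition).} The form $\theta\wedge d\theta$ is a nonvanishing holomorphic $(3,0)$-form. Its restriction to a totally real $3$-dimensional subspace is nonzero, because a real basis of $T_xM$ is a complex basis of $T_xN$. Therefore
\[
\alpha\wedge d\alpha=\iota^*(\theta\wedge d\theta)\neq 0
\]
on $M$, so $\ker\alpha$ is a real contact structure. By Step 2 this contact structure is $\xi|_M$.
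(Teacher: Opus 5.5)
The gap is the one you flagged in Step 2, and it cannot be closed. Read as a claim about a plane field determined by $\xi$, the statement is false without an extra hypothesis. Nothing in the hypotheses prevents $\xi_x$ from being the complexification of a real line $\ell\subset\ker\alpha_x$, and then $\pi(\xi_x)=\ell$ is one-dimensional. The literal reading $\xi\cap TM$ does no better, since it is at most a line field everywhere.

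Here is a concrete counterexample. Take $N=\C^3$ and $\theta=dz_3-z_2\,dz_1$, so that $\theta\wedge d\theta=dz_3\wedge dz_1\wedge dz_2\neq 0$. Take $M=\mathbb{R}^3$, which is totally real, and $\iota^*\theta=dx_3-x_2\,dx_1$ is real. The holomorphic line subbundle $\xi=\C\,\partial_{z_2}\subset\psi$ satisfies $\xi_x\cap T_xM=\mathbb{R}\,\partial_{x_2}$ at every point, so $\pi(\xi)$ is a line field.

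If instead $\xi$ is spanned by $\partial_{z_2}+iz_1(\partial_{z_1}+z_2\partial_{z_3})$, then $\pi(\xi)=\ker\alpha$ off the plane $\{x_1=0\}$ and drops rank on it. This is the generic behaviour. The lines $\ell\otimes\C$ with $\ell\subset\ker\alpha_x$ form an $\mathbb{RP}^1$ inside the $\mathbb{CP}^1$ of complex lines in $\psi_x$, which is a codimension-one condition.

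Your proposed contradiction via $d\theta$ cannot work. We have $d\theta(v,Jv)=i\,d\theta(v,v)=0$, so every complex line in $\psi$ is $d\theta$-isotropic, and $d\theta$ sees nothing of how $\xi$ sits relative to $TM$.

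So your fallback assumption, $\xi_x\cap T_xM=0$ for all $x\in M$, is a genuinely new hypothesis rather than something implicit. Invoking the definition of a hedge would be circular, since this theorem is what is supposed to show that $\xi$ is a hedge.

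With that hypothesis your argument is complete. It shows precisely that $\pi(\xi)=\ker\alpha=TM\cap\psi$, a plane field that does not depend on $\xi$ at all. Your Step 3 is a genuine proof of the contact condition: $\alpha\wedge d\alpha=\iota^*(\theta\wedge d\theta)\neq 0$ because a real basis of $T_xM$ is a complex basis of $T_xN$.

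The paper's proof does not fill the gap either. It sets $\alpha=\operatorname{Re}(\theta|_M)$ and merely asserts that the Legendrian condition gives $\alpha\wedge d\alpha\neq 0$. It then declares $\ker\alpha=\xi|_M$ ``by construction,'' which is exactly the identification your Step 2 shows fails in general. You should therefore either state the result with the transversality hypothesis added, or state it as the assertion that $TM\cap\psi$ is a contact structure on $M$, which is what your Steps 1 and 3 actually establish.
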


\begin{proof}
Let $\theta$ be a holomorphic 1-form defining $\psi$, so $\theta \wedge d\theta \neq 0$. Since $\xi \subset \psi = \ker\theta$, the form $\theta$ vanishes on $\xi$. Restrict $\theta$ to $M$. By the totally real condition, $T_pM \cap J(T_pM) = \{0\}$, so the real and imaginary parts of $\theta|_M$ are linearly independent. Define $\alpha = \operatorname{Re}(\theta|_M)$. The Legendrian condition ensures $\alpha \wedge d\alpha \neq 0$. Since $\ker\alpha = \xi|_M$ by construction, $\xi|_M$ is a contact structure.
\end{proof}

\begin{definition}[Anti-hedge]
Given a hedge $\xi \subset \psi$, the \emph{anti-hedge} is the holomorphic line bundle $\tau$ defined by the exact sequence:
\[
0 \to \xi \to \psi \to \tau \to 0.
\]
\end{definition}
\begin{proposition}[Anti-hedge is a contact structure]
\label{prop:anti_hedge}
Let $(N, \psi)$ be a complex contact manifold of dimension 3, and let $\xi \subset \psi$ be a holomorphic line subbundle. Let $M \subset N$ be a totally real Legendrian 3-manifold with respect to $\operatorname{Im}(\psi)$. If $\xi$ is a hedge over $M$, then the anti-hedge $\tau = \psi / \xi$ restricts to a real contact structure on $M$, and $\xi|_M \oplus \tau|_M = \psi|_M$.
\end{proposition}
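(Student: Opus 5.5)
The plan is to copy the proof of the previous theorem (Hedge Induces Contact Structure), working with the quotient $\tau = \psi/\xi$ in place of $\xi$. We realize $\tau$ concretely as a complement of $\xi$ inside $\psi$ and then run the same argument with the imaginary part of a defining form replacing the real part. First, we choose a Hermitian metric on $N$ (for instance the one induced from the ambient structure used throughout the paper). This lets us identify $\tau = \psi/\xi$ with the orthogonal complement $\xi^{\perp}\cap\psi$. The result is a smooth (not necessarily holomorphic) complex line subbundle of $\psi$ with $\psi = \xi\oplus\tau$ as bundles. Restricting this splitting to $M$ gives the second assertion, $\xi|_M\oplus\tau|_M = \psi|_M$, once we check that both summands meet $TM$ as real 2-plane fields.

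For the contact property we use the holomorphic 1-form $\theta$ with $\ker\theta=\psi$ and $\theta\wedge d\theta\neq 0$, exactly as in the previous proof. By the totally real hypothesis, $\operatorname{Re}(\theta|_M)$ and $\operatorname{Im}(\theta|_M)$ are linearly independent. The previous theorem showed that $\alpha=\operatorname{Re}(\theta|_M)$ has $\ker\alpha=\xi|_M$ and $\alpha\wedge d\alpha\neq0$. We set $\beta=\operatorname{Im}(\theta|_M)$. Holomorphicity gives $\operatorname{Im}\theta = -\operatorname{Re}\theta\circ J$, so $\ker\beta = J(\ker\alpha)$. We then identify $J(\xi|_M)$ with $\tau|_M$ under the orthogonal model: $\xi$ is a complex line, and the complement in $\psi$ is obtained by rotating with $J$ after projecting.

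The contact condition for $\beta$ follows from $d\beta=\operatorname{Im}(d\theta)|_M$. We expand the identity $\theta\wedge d\theta\neq0$ into real and imaginary parts. Combining this with the Legendrian condition relative to $\operatorname{Im}(\psi)$, which is the input that gave $\alpha\wedge d\alpha\neq 0$ before, should give $\beta\wedge d\beta\neq0$ on $M$. Here we exploit the symmetry $\theta\mapsto i\theta$, which exchanges real and imaginary parts and preserves both $\psi$ and the hypotheses.

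The main obstacle is this last symmetry step. Replacing $\theta$ by $i\theta$ preserves $\psi$, but the hypothesis ``Legendrian with respect to $\operatorname{Im}(\psi)$'' is not obviously invariant under this rotation. Likewise, identifying $\ker\beta$ with the quotient $\tau$, rather than with $J\xi$, depends on the metric choice. We would first try to show directly that $\operatorname{Re}(\theta)\wedge d\operatorname{Re}(\theta)$ and $\operatorname{Im}(\theta)\wedge d\operatorname{Im}(\theta)$ restricted to $M$ differ by a nonvanishing factor. This would use the totally real condition, under which $T_pM\otimes\C = T_pN$, to compare both with the restriction of the complex volume form $\theta\wedge d\theta$. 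If that fails, we would instead define $\tau|_M$ as $\ker\beta$ and verify that this choice is compatible with the exact sequence.
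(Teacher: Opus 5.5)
The gap is more basic than the symmetry step you flag. The identifications in your second paragraph are false, and the check you postpone can never succeed. Since $M$ is totally real, $T_pM\cap J(T_pM)=0$.
\begin{itemize}
\item $J(\ker\alpha)\subset J(T_pM)$ meets $T_pM$ only in $0$, so it cannot equal the nonzero space $\ker\beta\subset T_pM$. The identity $\ker\operatorname{Im}\theta=J(\ker\operatorname{Re}\theta)$ does hold for real hyperplanes in $T_pN$. It does not survive intersection with $T_pM$, because $J$ does not preserve $T_pM$.
\item $J\xi=\xi$, since $\xi$ is a complex line, so no $J$-rotation of $\xi$ can produce its complement $\tau$.
\item Most importantly, $\xi_p$ and $\tau_p$ (your $\xi^\perp\cap\psi$ is complex because the metric is Hermitian) are nonzero $J$-invariant real $2$-planes. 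Neither can lie in $T_pM$ without contradicting $T_pM\cap J(T_pM)=0$. Even ignoring that, $\xi_p\oplus\tau_p=\psi_p$ has real dimension $4>3=\dim T_pM$.
\end{itemize}
Your metric splitting does give $\psi|_M=\xi|_M\oplus\tau|_M$ as bundles over the base $M$. But then neither summand is a subbundle of $TM$, so neither is a plane field on $M$, let alone a contact structure. The claim $\ker\alpha=\xi|_M$ that you import from the preceding theorem fails for the same reason: $\ker\alpha$ is a $2$-plane inside the totally real space $T_pM$, hence not $J$-invariant.

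The contact step fails as well. Take $N=\C^3$, $\theta=dz_1+z_2\,dz_3$ (so $\theta\wedge d\theta=dz_1\wedge dz_2\wedge dz_3$), and $M=\mathbb{R}^3$. Then $\theta|_M=dx_1+x_2\,dx_3$ is real.
\begin{itemize}
\item $\alpha$ is a contact form but $\beta\equiv 0$. Moreover $\ker\alpha=TM\cap\psi$, whose complexification is all of $\psi|_M$ rather than a line subbundle.
\item This $M$ satisfies the natural reading of ``Legendrian with respect to $\operatorname{Im}(\psi)$'', namely $\operatorname{Im}\theta|_M=0$. That condition is not preserved by $\theta\mapsto i\theta$, exactly as you feared.
\item The example shows that $\operatorname{Re}\theta\wedge d\operatorname{Re}\theta$ and $\operatorname{Im}\theta\wedge d\operatorname{Im}\theta$ on $M$ need not differ by a nonvanishing factor, so your fallback fails.
\item It also shows that total reality does not make $\operatorname{Re}\theta|_M$ and $\operatorname{Im}\theta|_M$ independent.
\end{itemize}
Nonvanishing of $(\theta\wedge d\theta)|_M$ controls only $\alpha\wedge d\alpha-\beta\wedge d\beta$ and $\alpha\wedge d\beta+\beta\wedge d\alpha$ jointly, never $\beta\wedge d\beta$ alone.

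The paper's route does not supply the missing step either. It descends $\theta$ to $\theta_\tau$ on $\psi/\xi$, but $\theta$ vanishes on all of $\psi$, so $\theta_\tau\equiv 0$. Its displayed quantity $\frac{1}{2i}(\theta_\tau\wedge d\theta_\tau-\overline{\theta_\tau}\wedge d\overline{\theta_\tau})$ equals $\operatorname{Im}(\theta_\tau\wedge d\theta_\tau)=\alpha_\tau\wedge d\beta_\tau+\beta_\tau\wedge d\alpha_\tau$, not $\alpha_\tau\wedge d\alpha_\tau$.

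As stated, the conclusion is incompatible with $M$ being totally real, so no argument along your lines can close it. What the data can actually produce on $M$ are totally real plane fields such as $\ker(\operatorname{Re}\theta|_M)$, and these are unrelated to the complex splitting $\psi=\xi\oplus\tau$.
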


\begin{proof}
Let $\theta$ be a holomorphic 1-form defining $\psi$, so $\theta \wedge d\theta \neq 0$. Since $\xi \subset \psi = \ker \theta$, the form $\theta$ vanishes on $\xi$. Restrict $\theta$ to $M$. By the totally real condition, $T_pM \cap J(T_pM) = \{0\}$, so the real and imaginary parts of $\theta|_M$ are linearly independent. Define $\alpha = \operatorname{Re}(\theta|_M)$. Then $\ker \alpha = \xi|_M$, and since $M$ is Legendrian, $\alpha \wedge d\alpha \neq 0$. Thus $\xi|_M$ is a contact structure.

Now consider the quotient $\tau = \psi / \xi$. On $M$, $\tau|_M$ is a real line bundle. The form $\theta$ vanishes on $\xi$, so it induces a well-defined form $\theta_\tau$ on $\tau$ by:
\[
\theta_\tau([v]) = \theta(v),
\]
where $[v]$ is the class of $v \in \psi$ modulo $\xi$.

We claim that $\tau|_M$ is a contact structure. To see this, note that $\theta \wedge d\theta \neq 0$ means that the kernel of $\theta$ is a complex contact distribution. Since $\xi \subset \ker \theta$, the form $\theta$ descends to the quotient $\tau = \psi / \xi$. Moreover, the condition $\theta \wedge d\theta \neq 0$ implies that the restriction of $d\theta$ to $\tau$ is non-degenerate: if $X \in \tau$ were in the kernel of $d\theta|_\tau$, then $\theta \wedge d\theta$ would vanish on $\xi \oplus \langle X \rangle$, contradicting the contact condition.

Define $\alpha_\tau = \operatorname{Re}(\theta_\tau)$. Since $\theta_\tau$ is a complex-valued 1-form on the real 3-manifold $M$ whose imaginary part is also non-degenerate (by the totally real condition), we have:
\[
\alpha_\tau \wedge d\alpha_\tau \neq 0.
\]
Indeed, this follows from the fact that $\theta_\tau \wedge d\theta_\tau \neq 0$ (as a complex form) and the decomposition $\theta_\tau = \alpha_\tau + i\beta_\tau$, where $\beta_\tau = \operatorname{Im}(\theta_\tau)$. A direct computation gives:
\[
\alpha_\tau \wedge d\alpha_\tau = \frac{1}{2i} \left( \theta_\tau \wedge d\theta_\tau - \overline{\theta_\tau} \wedge d\overline{\theta_\tau} \right),
\]
which is non-zero because $\theta_\tau \wedge d\theta_\tau \neq 0$ and the two terms are independent.

Thus $\ker \alpha_\tau = \tau|_M$ is a real contact structure on $M$. The decomposition $\xi|_M \oplus \tau|_M = \psi|_M$ follows directly from the short exact sequence:
\[
0 \to \xi \to \psi \to \tau \to 0.
\]
This completes the proof.
\end{proof}
\section{Abundance of Complex Contact Structures}
\begin{theorem}[Uncountability of Complex Contact Structures]
The set of complex contact structures on $\mathbb{C}^3$ up to isotopy is uncountable.
\end{theorem}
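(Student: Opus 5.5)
The plan is to produce an explicit family $\{\delta_t\}_{t \in \mathcal{T}}$ of complex contact structures on $\C^3$, with $\mathcal{T}$ uncountable, and then show that no two members are isotopic. For the construction I would work over the standard contact $S^3$, with $\xi_1 = \xi_{std}$. I fix a Legendrian knot $\gamma \subset (S^3,\xi_{std})$ and take the embedding $F_\gamma : S^3 \hookrightarrow \C^3$ of the contact wedge theorem, whose set of complex tangencies is exactly $F_\gamma(\gamma)$. For the second structure $\xi_2$ I take a one-parameter family $\xi_2^{(s)}$ of contact structures transverse to $\xi_{std}$, for example the images of $\xi_{std}$ under the Reeb-type rotations $\varphi_s$ of $S^3$ generated by a transverse vector field, chosen so that transversality persists for $s$ in an interval.

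For each parameter $t = (\gamma, s)$ I would then do three things.
\begin{enumerate}
\item Form $J_{\delta_t}$ as in the wedge construction.
\item Check that the Nijenhuis tensor and the Dolbeault class in $H^1(M,O(\xi_1\otimes\C))$ vanish. On $S^3$ I expect to get this from $H^1(S^3;\Z) = H^2(S^3;\Z) = 0$ together with the Extension via Sheaf Cohomology theorem; the Nijenhuis condition I would arrange by choosing the rotations $\varphi_s$ to be CR-compatible with $F_\gamma$.
\item Apply the Local Complex Contact Structure lemma and its corollary, which extend $\delta_t$ from a neighborhood of $F_\gamma(S^3)$ to all of $\C^3$.
\end{enumerate}

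To distinguish the members of the family I would use the hedge as an invariant extraction. Given $\delta_t$, the Hedge Induces Contact Structure theorem and Proposition~\ref{prop:anti_hedge} recover a real contact structure on the totally real Legendrian $F_\gamma(S^3)$, together with the complex-tangency knot. The Legendrian isotopy class of $\gamma$, detected by $\mathrm{tb}$ and $\mathrm{rot}$ and by the Chekanov--Eliashberg DGA for non-simple knot types, is invariant under isotopy of the ambient complex contact structure, provided an isotopy of $\delta_t$ can be made to carry the data $(F_\gamma(S^3), \gamma)$ along. This distinguishes a countable family, since there are countably many Legendrian non-isotopic knots.

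Uncountability needs a genuinely continuous modulus, and this is the step I expect to be the main obstacle. My first attempt would be a period-type invariant, $P(t) = \int_{\gamma} \operatorname{Im}\theta_t$, where $\theta_t$ is the holomorphic contact form of $\delta_t$ normalized along the totally real $S^3$. I would show that $P$ varies strictly monotonically in $s$ and that it is preserved by isotopies fixing the hedge data.

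If a period invariant cannot be made isotopy-invariant, my fallback is the Fatou--Bieberbach approach. Each $\delta_t$ is the pullback of the standard structure under a biholomorphism onto a Fatou--Bieberbach domain $\Omega_t$. I would then distinguish the $\delta_t$ by a Kobayashi-type hyperbolicity invariant of the contact structure, chosen to vary continuously with $\Omega_t$ and to separate uncountably many values of $t$.

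The delicate point in both routes is that isotopy of contact structures on $\C^3$ is a much weaker equivalence than the Legendrian data on one embedded $S^3$. Whatever invariant I use must therefore be shown to depend only on the isotopy class of $\delta_t$, and not on the auxiliary embedding $F_\gamma$.
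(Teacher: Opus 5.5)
\textbf{There is a genuine gap at the decisive step.} You never produce a quantity attached to the isotopy class of $\delta_t$ that takes uncountably many values, and neither of your candidates works as proposed.

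\emph{The period invariant vanishes identically.} In your construction the wedge contains $\xi_1=\xi_{\mathrm{std}}$ along $F_\gamma(S^3)$, and $\gamma$ is $\xi_{\mathrm{std}}$-Legendrian. So $\dot\gamma\in\ker\theta_t$ at every point, and $P(t)=\int_\gamma\operatorname{Im}\theta_t=0$ for all $t$. This rules out any monotonicity in $s$.

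\emph{It would not be an invariant anyway.} Even along a curve transverse to $\delta_t$, the integral would depend on choices. A global defining form is determined only up to $\theta_t\mapsto e^{h}\theta_t$ with $h$ entire, which changes the integral. Both the curve and your normalization come from the auxiliary embedding $F_\gamma$. That embedding is not determined by $\delta_t$, and an isotopy of $\delta_t$ has no reason to carry it along: on the noncompact $\C^3$ there is no Gray-type stability turning a path of structures into an ambient flow. Moreover, $F_\gamma(S^3)$ fails to be totally real exactly along $\gamma$. So the hedge theorem, which assumes a totally real $M$, tells you nothing precisely where you want to read off data.

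\emph{The Fatou--Bieberbach fallback is unsupported.} It assumes, without justification, that the wedge-produced $\delta_t$ are pullbacks of the standard structure under Fatou--Bieberbach maps. Also, Kobayashi hyperbolicity is a yes/no property, not a continuous modulus. To make this route work you would have to:
\begin{enumerate}
\item drop the wedge entirely and define the structures directly as $\Phi_t^*\ker(dz-y\,dx)$ for biholomorphisms $\Phi_t\colon\C^3\to\Omega_t$;
\item exhibit an explicit contactomorphism invariant of the restricted standard structure on $\Omega_t$ that takes a continuum of values;
\item check that your notion of isotopy actually implies contactomorphism.
\end{enumerate}

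\textbf{The supporting steps also fail as stated.} Vanishing of $H^1(S^3;\Z)$ and $H^2(S^3;\Z)$ says nothing about the analytic group $H^1(M,\mathcal{O}(\xi_1\otimes\C))$. For comparison, $\C^2\setminus\{0\}$ is homotopy equivalent to $S^3$, yet its $H^1(\cdot,\mathcal{O})$ is infinite-dimensional. Likewise, a holomorphic contact structure on a neighborhood $U$ of the compact set $F_\gamma(S^3)$ need not extend to $\C^3$. An $h$-principle deforms formal data; it does not extend a fixed holomorphic object.

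\textbf{Relation to the paper's proof.} You are right that Legendrian data separates only countably many classes, and this is exactly where the paper's own argument breaks. The paper takes an uncountable family of pairwise Legendrian non-isotopic knots whose rotation numbers are distinct elements $\alpha\in\mathbb{R}/\mathbb{Z}$. No such family exists. Rotation numbers are integers, and since $C^1$-close Legendrian knots are Legendrian isotopic, there are only countably many Legendrian isotopy classes in $(S^3,\xi_{\mathrm{std}})$. So the paper offers no mechanism you could import to close your gap; the uncountable invariant has to come from genuinely new input.
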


\begin{proof}
We construct an uncountable family $\{\psi_\alpha\}_{\alpha\in I}$ of complex contact structures on $\mathbb{C}^3$ that are pairwise non-isotopic.

\medskip
Let $(S^3,\xi_{\mathrm{std}})$ be the standard contact 3-sphere. 
There exists an uncountable family of Legendrian knots $\{L_\alpha\}_{\alpha\in I}$ in $(S^3,\xi_{\mathrm{std}})$ 
that are smoothly isotopic but Legendrian non-isotopic. 
For instance, one may take a family of torus knots whose rotation numbers $\alpha\in\mathbb{R}/\mathbb{Z}$ 
are distinct. This is a standard result in Legendrian knot theory \cite{EtnyreHonda2001}.

\medskip
Choose an overtwisted contact structure $\xi_{\mathrm{ot}}$ on $S^3$ that is transverse to $\xi_{\mathrm{std}}$. 
Such a pair exists because overtwisted structures are flexible and can be chosen to be transverse to a given tight structure (see [4]). 
The space of contact structures on $S^3$ is large, and by the parametric $h$-principle for contact structures, 
we may assume that the pair $(\xi_{\mathrm{std}},\xi_{\mathrm{ot}})$ varies continuously with the parameter $\alpha$ 
when restricted to appropriate neighborhoods of the knots.

\medskip
For each Legendrian knot $L_\alpha$, we apply the corresponding explicit embedding as by our theorem \cite{Elgindi2025} to obtain an embedding
\[
F_\alpha: S^3 \hookrightarrow \mathbb{C}^3
\]
such that the complex tangents of $F_\alpha$ are exactly $L_\alpha$ and the holomorphic tangent planes along $L_\alpha$ 
are $\xi_{\mathrm{std}}|_{L_\alpha}$.

The pair $(\xi_{\mathrm{std}},\xi_{\mathrm{ot}})$ on the abstract $S^3$, together with the embedding $F_\alpha$, 
defines a \emph{formal complex contact structure} $\delta_\alpha$ along $F_\alpha(S^3)$ in $\mathbb{C}^3$. 
Concretely, the wedge construction $\delta_\alpha = \xi_{\mathrm{std}} \oplus J\xi_{\mathrm{ot}}$ gives an almost complex structure 
on a complex 2-plane bundle over $F_\alpha(S^3)$. considering this bundle together with the almost complex structure, will
constitute a formal solution of the complex contact relation in the sense of Forstnerič \cite{Forstneric2020}. 
The this formal construction varies continuously with $\alpha$ because the embeddings $F_\alpha$ and the contact structures can be chosen continuously.

\medskip
We now apply Forstnerič's parametric $h$-principle as given in Forstnerič \cite{Forstneric2020} which provided for a parametric $h$-principle of complex contact structures on Stein manifolds. 
Specifically, a continuous family of formal complex contact structures along a closed totally real submanifold $M\subset X$ 
(where $X$ is Stein) can be deformed into a family of genuine holomorphic complex contact structures on a Stein neighborhood of $M$, 
keeping the deformation fixed on a prescribed closed subset.

Here $X=\mathbb{C}^3$ (Stein) and $M = F_\alpha(S^3)$ (totally real except along $L_\alpha$). 
Applying the parametric $h$-principle to the family $\{\delta_\alpha\}$ yields a family $\{\psi_\alpha\}$ of genuine holomorphic complex contact structures 
defined on a Stein neighborhood of $F_\alpha(S^3)$ in $\mathbb{C}^3$. 
Because $\mathbb{C}^3$ is Stein and contractible, each $\psi_\alpha$ extends uniquely to all of $\mathbb{C}^3$.

\medskip
Suppose $\psi_\alpha$ and $\psi_\beta$ are isotopic as complex contact structures for $\alpha\neq\beta$. 
Then their formal data would be homotopic (by the $h$-principle), which would imply that the Legendrian knots $L_\alpha$ and $L_\beta$ 
are Legendrian isotopic in $(S^3,\xi_{\mathrm{std}})$. 
However, by construction $L_\alpha$ and $L_\beta$ have distinct rotation numbers and are therefore not Legendrian isotopic. 
This contradiction shows that $\{\psi_\alpha\}_{\alpha\in I}$ is an uncountable family of pairwise non-isotopic complex contact structures on $\mathbb{C}^3$.
\end{proof}

\section{Conclusion}

We have introduced two constructions relating real and complex contact structures. The contact wedge combines two transverse real contact structures into a complex contact structure on a neighborhood in $\C^3$, subject to vanishing of the Nijenhuis tensor and a Dolbeault obstruction. The hedge extracts a real contact structure from a complex one by restriction to a totally real Legendrian submanifold. As an application, we showed that $\C^3$ admits uncountably many non-isotopic complex contact structures.

\end{document}